\newtheorem{proposition}{Proposition}[section]
\newtheorem{theorem}[proposition]{Theorem}
\newtheorem{corollary}[proposition]{Corollary}
\newtheorem{remark}[proposition]{Remark}
\newenvironment{proof}{\smallskip\noindent\emph{Proof.}\hspace{1pt}}%
{\hspace{-5pt}{\nobreak\quad\nobreak\hfill\nobreak$\square$\vspace{8pt}%
\par}\smallskip\goodbreak}
\newcommand{\D}{\mathcal{D}}
\newcommand{\reali}{{\mathbb{R}}}
\newcommand{\naturali}{{\mathbb{N}}}
\renewcommand{\epsilon}{\varepsilon}
\renewcommand{\phi}{\varphi}
\renewcommand{\theta}{\vartheta}
\title{A simple counterexample related to the Lie--Trotter product formula}
\author{Canzi Claudia \\ \footnotesize Milano--Bicocca University
  \and Graziano Guerra\\ \footnotesize Milano--Bicocca University}
\begin{document}

\maketitle

\begin{abstract}
  In this note a very simple example is given which shows
  that if the sum of two semigroup generators is itself a generator,
  the generated semigroup in general can not be rapresented by the
  Lie-Trotter product formula. 
\end{abstract}

\section{Introduction}
In 1959, H.F. Trotter \cite{Trotter} extended the Lie product formula for matrices
\begin{displaymath}
  e^{A+B}=\lim_{n\to\infty}\left(e^{\frac{1}{n}A}e^{\frac{1}{n}B}\right)^{n}
\end{displaymath}
to unbounded operator in Banach spaces.
The original result can be generalized and one can prove the following
theorem (see \cite[Chapter III, Corollary 5.8]{EngelNagel})
\begin{theorem}
  Let $\left(S\left(t\right)\right)_{t\ge 0}$ and
  $\left(T\left(t\right)\right)_{t\ge 0}$
  be strongly continuous semigroups on a Banach space $X$ satisfying
  the stability condition
  \begin{equation}
      \label{eq:stability}
    \left\|
      \left[S\left(\frac{t}{n}\right)T\left(\frac{t}{n}\right)\right]^{n}\right\|
    \le M e^{wt}, \quad\text{ for all }t\ge 0,\;
    n\in\naturali\setminus \left\{0\right\}
  \end{equation}
  and for constants $M\ge 1$, $w\in\reali$. Consider the ``sum'' $A+B$ on
  $\D:=\D(A)\cap\D(B)$ of the generators
  $\left(A,\D\left(A\right)\right)$ of
  $\left(S\left(t\right)\right)_{t\ge 0}$ and
  $\left(B,\D\left(B\right)\right)$ of
  $\left(T\left(t\right)\right)_{t\ge 0}$, and assume that $\D$ and
  $\left(\lambda_{0} -A -B\right)\D$ are dense in $X$ for some
  $\lambda_{0}> w$. Then $C:=\overline{A+B}$ generates a strongly
  continuous semigroup $\left(U(t)\right)_{t\ge 0}$ given by the
  \emph{Trotter product formula}:
  \begin{equation}
    \label{eq:trotter}
    U(t)x=\lim_{t\to\infty}\left[S\left(\frac{t}{n}\right)T\left(\frac{t}{n}\right)\right]^{n}x,
    \quad\text{ for all }x\in X,
  \end{equation}
  with uniform convergence for $t$ in compact intervals.
\end{theorem}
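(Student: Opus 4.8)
The plan is to recognize the statement as a special case of Chernoff's product formula, applied to the family $F(t):=S(t)T(t)$, $t\ge0$. First I would collect the elementary facts about $F$: one has $F(0)=\Id$; $F$ is strongly continuous on $[0,\infty)$, since for fixed $x$
\begin{displaymath}
  F(t)x-F(t_0)x \;=\; S(t)\bigl(T(t)x-T(t_0)x\bigr) \;+\; \bigl(S(t)-S(t_0)\bigr)T(t_0)x\;\longrightarrow\;0
\end{displaymath}
by strong continuity of $S$ and $T$ together with local boundedness of $\norma{S(\cdot)}$; and the stability hypothesis \Ref{eq:stability} says exactly that $\norma{F(s)^n}\le Me^{wns}$ for all $s\ge0$, $n\ge1$.

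The second step is to differentiate $F$ at $0$ along $\D$. For $x\in\D=\D(A)\cap\D(B)$,
\begin{displaymath}
  \frac{F(h)x-x}{h} \;=\; S(h)\,\frac{T(h)x-x}{h} \;+\; \frac{S(h)x-x}{h}\;\xrightarrow[h\to0^+]{}\;Bx+Ax ,
\end{displaymath}
because $\frac1h(T(h)x-x)\to Bx$, $\norma{S(h)}$ is bounded and $S(h)\to\Id$ strongly as $h\to0^+$, and $\frac1h(S(h)x-x)\to Ax$. Hence the bounded operators $C_h:=\frac1h(F(h)-\Id)$ satisfy $C_hx\to(A+B)x$ for every $x\in\D$: the strong derivative of $F$ at $0$ agrees with $A+B$ on the dense subspace $\D$.

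At this point the hypotheses of the theorem are exactly those of Chernoff's product formula (see \cite[Chapter III]{EngelNagel}): $F$ is a strongly continuous operator family with $F(0)=\Id$ and $\norma{F(s)^n}\le Me^{wns}$, and the restriction of $F'(0)$ to the dense subspace $\D$ is the densely defined operator $A+B$ for which $(\lambda_0-A-B)\D$ is dense for some $\lambda_0>w$. I would then invoke that theorem to conclude that $A+B$ is closable, that $C=\overline{A+B}$ generates a strongly continuous semigroup $(U(t))_{t\ge0}$, and that \Ref{eq:trotter} holds with the stated uniformity. If one prefers to reprove Chernoff's theorem here, the two ingredients are: (i) the Chernoff estimate $\norma{F(h)^nx-e^{n(F(h)-\Id)}x}\le Me^{n(e^{wh}-1)}\sqrt n\,\norma{(F(h)-\Id)x}$, obtained from the power series of $e^{n(F(h)-\Id)}$, which with $h=t/n$ gives $\norma{F(t/n)^nx-e^{tC_{t/n}}x}=O\!\bigl(t\norma{C_{t/n}x}/\sqrt n\bigr)\to0$ for $x\in\D$; and (ii) the Trotter--Kato approximation theorem for the uniformly continuous semigroups $e^{tC_h}$, whose generators $C_h$ converge to $A+B$ on $\D$ and whose bounds $\norma{e^{tC_h}}\le M\exp\!\bigl(\frac th(e^{wh}-1)\bigr)$ are uniform for $h$ small, which produces $U$ and the limit $e^{tC_{t/n}}x\to U(t)x$. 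Chaining (i) and (ii) on $\D$ and extending by density (using $\norma{F(t/n)^n}\le Me^{wt}$) then yields \Ref{eq:trotter} on all of $X$, with uniformity for $t$ in compacts inherited from the two ingredients.

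I expect the genuine obstacle to be step (ii), i.e.\ showing that $\overline{A+B}$ really is a generator. Density of $\D$ alone does not suffice; one needs the assumption that $(\lambda_0-A-B)\D$ is dense, which is what allows one to extract a strongly convergent subnet from the resolvents $(\lambda_0-C_h)^{-1}$ and to identify its limit with $(\lambda_0-\overline{A+B})^{-1}$, after which the Hille--Yosida estimates for $U$ follow from the uniform bounds on the $e^{tC_h}$. By comparison, the Chernoff $\sqrt n$-estimate in (i), although it is the technical heart of the ``product'' statement, is an elementary computation once (ii) is in hand.
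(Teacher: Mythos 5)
The paper does not prove this theorem at all: it is quoted from Engel--Nagel (Chapter III, Corollary 5.8), where it is deduced precisely the way you propose, by applying Chernoff's product formula to the family $F(t)=S(t)T(t)$ together with the Trotter--Kato approximation argument for the generation part. Your outline is correct and follows essentially that same standard route, so apart from the routine verification of the Chernoff lemma constants and the Trotter--Kato step you already single out, there is nothing to add.
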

From this theorem, the following corollary can be deduced
\begin{corollary}
  \label{cor:12}
  Let $\left(S\left(t\right)\right)_{t\ge 0}$ and
  $\left(T\left(t\right)\right)_{t\ge 0}$
  be strongly continuous semigroups on a Banach space $X$,
  with generators 
  $\left(A,\D\left(A\right)\right)$ and
  $\left(B,\D\left(B\right)\right)$ respectively, such that:
  \begin{enumerate}[a)]
  \item 
    \label{item:stability}
    the stability condition \eqref{eq:stability} holds;
  \item
    \label{item:closure}
    the closure of the sum of the generators
    $C=\overline{A+B}$ generates a strongly continuous
    semigroups $\left(U\left(t\right)\right)_{t\ge 0}$.
  \end{enumerate}
  Then  $\left(U\left(t\right)\right)_{t\ge 0}$ is given by the 
  \emph{Trotter product formula} \eqref{eq:trotter} 
  with uniform convergence for $t$ in compact intervals.
\end{corollary}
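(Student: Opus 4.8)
The plan is to show that hypotheses a) and b) already force the two density conditions required by the Theorem above, after which the assertion will follow at once from that Theorem together with the uniqueness of the semigroup generated by a given operator.

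First I would record what b) provides. Since $C=\overline{A+B}$ is, by definition, the closure of the operator $A+B$ with domain $\D=\D(A)\cap\D(B)$, the subspace $\D$ is a core for $C$: it is dense in $\D(C)$ for the graph norm $\norma{\cdot}_{C}$. Because $C$ generates a strongly continuous semigroup, $\D(C)$ is dense in $X$; since the graph norm dominates the norm of $X$, density of $\D$ in $\bigl(\D(C),\norma{\cdot}_{C}\bigr)$ yields density of $\D$ in $X$, which is the first of the two density hypotheses of the Theorem.

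Next I would exhibit a suitable $\lambda_{0}$. The resolvent set $\rho(C)$ of the generator $C$ contains a right half-line $(\mu,+\infty)$, so I may fix any $\lambda_{0}>\max\{\mu,w\}$; then $\lambda_{0}>w$ and $\lambda_{0}\in\rho(C)$, so $\lambda_{0}-C\colon\D(C)\to X$ is a bijection with bounded inverse, hence a topological isomorphism from $\bigl(\D(C),\norma{\cdot}_{C}\bigr)$ onto $X$. Since $Cx=Ax+Bx$ for every $x\in\D$, we have $(\lambda_{0}-A-B)\D=(\lambda_{0}-C)\D$, and the image of the dense subset $\D$ of $\bigl(\D(C),\norma{\cdot}_{C}\bigr)$ under this isomorphism is dense in $X$; thus $(\lambda_{0}-A-B)\D$ is dense in $X$, the second density hypothesis of the Theorem.

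With the stability condition \eqref{eq:stability} assumed in a) and the two density conditions now in hand, the Theorem applies: it yields a strongly continuous semigroup generated by $\overline{A+B}$ and given by the Trotter product formula \eqref{eq:trotter}, with uniform convergence on compact $t$-intervals. By b) this generator is $C$, which generates $\left(U(t)\right)_{t\ge0}$; since a strongly continuous semigroup is uniquely determined by its generator, $\left(U(t)\right)_{t\ge0}$ must coincide with the semigroup furnished by the Theorem, i.e.\ it is given by \eqref{eq:trotter}. I expect the only step that is not routine bookkeeping to be the core argument of the third paragraph --- that $\D$ being a core for $C$ forces $(\lambda_{0}-A-B)\D$ to be dense --- and even this reduces to the standard fact that, for $\lambda_{0}\in\rho(C)$, the operator $\lambda_{0}-C$ is a homeomorphism of the graph-normed domain of $C$ onto $X$.
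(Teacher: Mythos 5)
Your proof is correct. The paper itself offers no proof of this corollary---it only remarks that it ``can be deduced'' from the preceding theorem---and your deduction is the standard one: since $C=\overline{A+B}$, the subspace $\D=\D(A)\cap\D(B)$ is by definition a core for $C$, which gives density of $\D$ in $X$ and, because $\lambda_{0}-C$ is an isomorphism from the graph-normed domain $\D(C)$ onto $X$ for $\lambda_{0}\in\rho(C)$ with $\lambda_{0}>w$, also density of $(\lambda_{0}-A-B)\D=(\lambda_{0}-C)\D$; the theorem then applies, and uniqueness of the semigroup generated by $C$ identifies the Trotter-product limit with $\left(U(t)\right)_{t\ge 0}$.
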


This raises the question weather \textit{\ref{item:closure})} is enough to
guarantee the convergence of  the \emph{Trotter product formula}
\eqref{eq:trotter} to the generated semigroup
$\left(U\left(t\right)\right)_{t\ge 0}$. 

In 2000 F. K\"{u}hnemund and M. Wacker \cite{KuhnWack} answered
negatively to this question providing a counterexample. However, their
counterexample is quite elaborated and makes use of results 
on strongly continuous evolution families.

Aim of this note is to show that very simple and natural
counterexamples come from linear hyperbolic systems of
partial differential equations. In particular we will show
that there exist very simple
examples of semigroups such that the sum of their generators
generates itself (without need of taking any closure) a semigroup,
but this semigroup can not be obtained through the
\emph{Trotter product formula} \eqref{eq:trotter}. 

A counterexample showing that \textit{\ref{item:stability})} is not
sufficient for the \emph{Trotter formula} to hold is already present
in the original paper by Trotter \cite{Trotter}. A
counterexample relevant for \emph{nonlinear semigroups} is given in
\cite{Kurz}.

In \cite{Corli,DynSysGuerra} there are extensions of Corollary
\ref{cor:12} to nonlinear semigroups in metric spaces requiring
commutator conditions instead of \textit{\ref{item:closure})}.
A commutator condition
was also used in a linear setting in \cite{MR1860489}.

\section{The counterexample}

Let $X$ be the Banach space of all vector valued bounded and
uniformly continuous functions on $\reali$,
$X=\mathrm{C}_{\mathrm{ub}}\left(\reali,\reali^{n}\right)$,
provided with the supremum norm
\begin{displaymath}
  \|f\|_{X}=\sup_{x\in\reali}\|f(x)\|,
\end{displaymath}
where $\|\cdot\|$ denotes the standard Euclidean norm in $\reali^{n}$.

Take now a $n\times n$ hyperbolic matrix $\mathcal{A}$, that is a 
real diagonalizable matrix with real
eigenvalues, and denote by
$\lambda_{1}^{\mathcal{A}}\le \ldots \le\lambda_{n}^{\mathcal{A}}$,\ \ 
$r_{1}^{\mathcal{A}},\ldots,r_{n}^{\mathcal{A}}$ and
$l_{1}^{\mathcal{A}},\ldots,l_{n}^{\mathcal{A}}$ its
eigenvalues, right eigenvectors and left eigenvectors respectively,
normalized in such a way that $\|r_{i}^{\mathcal{A}}\| = 1$ for $i=1,\ldots,n$
and $\left\langle l_{j}^{\mathcal{A}},r_{i}^{\mathcal{A}} \right\rangle =
\delta_{i,j}$ for $i,j=1,\ldots,n$, where
the symbol $\left\langle \cdot,\cdot\right\rangle$ denotes the scalar
product
in $\reali^{n}$ and $\delta_{i,j}$ is the Kronecker delta.

Then, one can define the following strongly continuous semigroup
(it is a group actually) on $X$:
\begin{equation}
  \label{eq:semigroup}
  \left[S(t)f\right](x) = \sum_{i=1}^{n}\left\langle l_{i}^{\mathcal{A}},
  f(x + \lambda_{i}^{\mathcal{A}}t)\right\rangle
  r_{i}^{\mathcal{A}},\qquad t\ge 0,\; x\in\reali \qquad f\in X.
\end{equation}

Indeed we have the following proposition.

\begin{proposition}
  \label{prop:21}
  If the hyperbolic matrix $\mathcal{A}$ is
  invertible, then $\left(S(t)\right)_{t\ge 0}$ defined in
  \eqref{eq:semigroup} is a
  strongly continuous group  on the Banach space $X$ and its
  generator $\left(A,\mathcal{D}(A)\right)$ is given by
  \begin{equation}
    \label{eq:gendef}
    \begin{split}
      \mathcal{D}(A)&
      =\left\{f\in X :\; f'\in X\right\},\\
      Af &= \mathcal{A}f'\quad
      \text{ for all }f\in \mathcal{D}(A).
    \end{split}
  \end{equation}
  If $f\in \mathcal{D}(A)$ the function $u(t,x)=\left[S(t)f\right](x)$
  is the unique classical solution to the following Cauchy problem for
  a hyperbolic system of first order partial differential equations
  \begin{displaymath}
    \begin{cases}
      \frac{\partial}{\partial
        t}u(t,x)=\mathcal{A}\frac{\partial}{\partial x}u(t,x)\\
      u(0,x)=f(x).
    \end{cases}
  \end{displaymath}
\end{proposition}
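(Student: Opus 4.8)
The plan is to reduce every claim to the behaviour of the semigroup in \emph{characteristic coordinates}. Dropping the superscript $\mathcal{A}$ and writing $\lambda_i,r_i,l_i$ for its eigenvalues and eigenvectors, the biorthogonality $\langle l_j,r_i\rangle=\delta_{i,j}$ gives for every $v\in\reali^n$ the spectral decompositions $v=\sum_i\langle l_i,v\rangle r_i$ and $\mathcal{A}v=\sum_i\lambda_i\langle l_i,v\rangle r_i$. Setting $w_i(x)=\langle l_i,f(x)\rangle$ for $f\in X$, the defining formula reads $[S(t)f](x)=\sum_i w_i(x+\lambda_i t)\,r_i$, so that in these coordinates $S(t)$ is simply the direct sum of the $n$ translations at velocities $\lambda_i$. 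Every step below is carried out componentwise and then reassembled through the spectral decomposition; only the characterisation of $\mathcal{D}(A)$ uses that $\mathcal{A}$ is invertible, i.e.\ that all $\lambda_i\neq0$.

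The group law and strong continuity are routine. Projecting the defining formula with $l_j$ and using $\langle l_j,r_i\rangle=\delta_{i,j}$ gives $\langle l_j,[S(s)f](y)\rangle=w_j(y+\lambda_j s)$, hence $[S(t)S(s)f](x)=\sum_i w_i(x+\lambda_i(t+s))\,r_i=[S(t+s)f](x)$ for all $t,s\in\reali$, while $S(0)=\Id$ is the spectral decomposition. Bounding each translated term by $\norma{l_i}\,\norma{f}_{X}$ gives $\norma{[S(t)f](x)}\le\sum_i\norma{l_i}\,\norma{f}_{X}$ uniformly in $t,x$, and $\norma{[S(t)f](x)-[S(t)f](y)}\le\sum_i\norma{l_i}\sup_z\norma{f(z+x-y)-f(z)}$, so $S(t)f\in X$ and $\norma{S(t)}\le\sum_i\norma{l_i}$. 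Strong continuity at $0$ follows from $\norma{S(t)f-f}_{X}\le\sum_i\norma{l_i}\sup_x\norma{f(x+\lambda_i t)-f(x)}\to0$ by uniform continuity of $f$, and at every $t_0$ from the group law; thus $(S(t))_{t\in\reali}$ is a strongly continuous group.

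For the generator we prove both inclusions in \eqref{eq:gendef}. If $f\in X$ has $f'\in X$, then each $w_i\in\C1$ with $w_i'\in X$, and the identity $\frac{w_i(x+\lambda_i t)-w_i(x)}{t}-\lambda_i w_i'(x)=\frac1t\int_0^{\lambda_i t}\big(w_i'(x+s)-w_i'(x)\big)\,ds$, combined with the uniform continuity of $w_i'$, gives $\sup_x\big|\frac{w_i(x+\lambda_i t)-w_i(x)}{t}-\lambda_i w_i'(x)\big|\to0$; summing against $r_i$ shows $\frac{S(t)f-f}{t}\to\sum_i\lambda_i w_i'\,r_i=\mathcal{A}f'$ in $X$, so $f\in\mathcal{D}(A)$ and $Af=\mathcal{A}f'$. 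The converse inclusion is the main obstacle and is where invertibility is needed: if $f\in\mathcal{D}(A)$ and $g:=Af$, then, $(S(t))$ being a group, $\frac{S(t)f-f}{t}\to g$ in $X$ as $t\to0$; projecting with $l_i$ gives $\frac{w_i(\cdot+\lambda_i t)-w_i(\cdot)}{t}\to\langle l_i,g\rangle$ uniformly, and since $\lambda_i\neq0$ the substitution $\tau=\lambda_i t$ shows that the difference quotients $\tau^{-1}\big(w_i(\cdot+\tau)-w_i(\cdot)\big)$ converge uniformly, as $\tau\to0$, to $\langle l_i,g\rangle/\lambda_i$. Uniform convergence of difference quotients of the bounded uniformly continuous function $w_i$ forces $w_i\in\C1$ with $w_i'=\langle l_i,g\rangle/\lambda_i\in X$; hence $f'=\sum_i w_i'\,r_i\in X$ and $g=\sum_i\lambda_i w_i'\,r_i=\mathcal{A}f'$, proving \eqref{eq:gendef}.

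It remains to treat the Cauchy problem. For $f\in\mathcal{D}(A)$ we have $w_i\in\C1$ and $w_i'\in X$, so $u(t,x)=\sum_i w_i(x+\lambda_i t)\,r_i$ is $\C1$ on $[0,\infty)\times\reali$ with $\partial_t u=\sum_i\lambda_i w_i'(x+\lambda_i t)\,r_i$ and $\partial_x u=\sum_i w_i'(x+\lambda_i t)\,r_i$, whence $\partial_t u=\mathcal{A}\partial_x u$ and $u(0,\cdot)=f$. For uniqueness, if $v$ is any $\C1$ solution, set $z_i(t,x)=\langle l_i,v(t,x)\rangle$; using the left-eigenvector relation $\mathcal{A}^{\top}l_i=\lambda_i l_i$ one gets $\partial_t z_i=\langle l_i,\mathcal{A}\partial_x v\rangle=\lambda_i\partial_x z_i$, so $t\mapsto z_i(t,x-\lambda_i t)$ is constant and $z_i(t,x)=z_i(0,x+\lambda_i t)=w_i(x+\lambda_i t)$; therefore $v(t,x)=\sum_i z_i(t,x)\,r_i=[S(t)f](x)$. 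I expect the only genuine difficulty to be the converse generator inclusion: it rests on the elementary lemma that uniform convergence of difference quotients of a continuous function produces a $\C1$ function, and precisely there the hypothesis $\det\mathcal{A}\neq0$ is indispensable — otherwise the zero-velocity components would be unconstrained and $\mathcal{D}(A)$ strictly larger than $\{f\in X:f'\in X\}$.
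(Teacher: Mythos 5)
Your proof is correct and follows essentially the same route as the paper: decompose along the eigenbasis via $\langle l_j,r_i\rangle=\delta_{i,j}$, verify the group law and strong continuity from uniform continuity, prove $A f=\mathcal{A}f'$ on $\{f\in X: f'\in X\}$ through the integral form of the difference quotient, and recover the reverse inclusion by dividing by $\lambda_i\neq 0$ so that convergence of difference quotients yields differentiability of each $\langle l_i,f\rangle$. The only difference is that you prove the Cauchy-problem statement directly (existence by computation, uniqueness by characteristics via $\mathcal{A}^{\top}l_i=\lambda_i l_i$), whereas the paper delegates this to a reference; that addition is sound.
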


\begin{proof}
  If $f\in X$, $S(t)f\in X$ for all $t\in\reali$ since any linear combination of
  uniformly continuous bounded functions is a
  uniformly continuous bounded function. The group property
  follows from the relation $\left\langle
    l_{i}^{\mathcal{A}},r_{j}^{\mathcal{A}}\right\rangle=\delta_{ij}$
   by direct computations.
  Finally the uniform continuity of $f\in X$ implies that
  $\lim_{t\to 0}S(t)f=f$ in $X$. This concludes the proof that
  $S(t)$ is a strongly continuous group in $X$.

  Denote now by $\left(\tilde A,\mathcal{D}(\tilde A)\right)$
  its generator. We have to show that $\tilde A = A$ as defined in
  \eqref{eq:gendef}. Take first $f\in \mathcal{D}(A)$ and note that
  $f=\sum_{i=1}^{n}\left\langle l_{i}^{\mathcal{A}},
  f\right\rangle  r_{i}^{\mathcal{A}}$ and that
  the action of the hyperbolic matrix $\mathcal{A}$ can be expressed by
  $\mathcal{A}f'=\sum_{i=1}^{n}\left\langle l_{i}^{\mathcal{A}},
  f'\right\rangle \lambda_{i}^{\mathcal{A}}r_{i}^{\mathcal{A}}$.
  We can compute
\begin{displaymath}
 \begin{split} \lim_{t\to
0}&{\left\|\dfrac{S(t)f-f}{t}-\mathcal{A}f'\right\|_{X}}= \lim_{t\to
0}{\sup_x{\left\|\dfrac{\left[S(t)f\right](x)
-f(x)}{t}-\mathcal{A}f'(x)\right\|}}\\ &=\lim_{t\to
0}{\sup_x{\left\|\frac{1}{t}\sum_{i=1}^n{\left\langle l_i^{\mathcal{A}},
f(x+\lambda_i^{\mathcal{A}}t)-f(x)\right\rangle r_i^{\mathcal{A}}}
-\sum_{i=1}^n{\left\langle l_i^{\mathcal{A}},f'(x)\right\rangle
\lambda_{i}^{\mathcal{A}}r_{i}^{\mathcal{A}} }\right\|}}\\
&=\lim_{t\to 0}{\sup_x{\left\|\sum_{i=1}^n{\left\langle l_i^{\mathcal{A}},
\dfrac{f(x+\lambda_i^{\mathcal{A}}t)-f(x)}{t}-\lambda_{i}^{\mathcal{A}}f'(x)\right\rangle
r_i^{\mathcal{A}}}\right\|}}\\ &=\lim_{t\to
0}{\sup_x{\left\|\sum_{i=1}^n{\left\langle l_i^{\mathcal{A}},
      \lambda_{i}^{\mathcal{A}}\int_0^1\left({f'(x+\lambda_i^{\mathcal{A}}st)-f'(x)}\right)
      \;ds\right\rangle
r_i^{\mathcal{A}}}\right\|}}=0,
 \end{split}
\end{displaymath}
where we have used the uniform continuity of $f'$.
Hence if $f\in\mathcal{D}(A)$ then $f\in \mathcal{D}(\tilde A)$ and
$\tilde A f=Af$, that is $A \subset \tilde A$.

Now choose $f\in \mathcal{D}(\tilde A)$, then 
$\lim_{t\to 0}\frac{S(t)f-f}{t}=g$ in $X$ for some $g=\tilde A f\in X$.
This implies the following pointwise limit 
\begin{displaymath}
  \lim_{t\to 0}\sum_{i=1}^{n}\frac{\left\langle l_i^{\mathcal{A}},
    f\left(x+\lambda_i^{\mathcal{A}}t\right)
  \right\rangle - \left\langle l_i^{\mathcal{A}},
    f(x)  \right\rangle}{t}r_i^{\mathcal{A}}=\sum_{i=1}^{n}\left\langle l_i^{\mathcal{A}},
    \left[\tilde A f\right](x)
  \right\rangle r_i^{\mathcal{A}}.
\end{displaymath}
Since $\lambda_i^{\mathcal{A}}\neq 0$ for $i=1,...,n$ ($\mathcal{A}$
is invertible), we obtain that
$\left\langle l_i^{\mathcal{A}},f(x)\right\rangle$ is differentiable
with derivative
$\dfrac{1}{\lambda_i^{\mathcal{A}}}\left\langle l_i^{\mathcal{A}},
  \left[\tilde A f\right](x)\right\rangle$.
Then $f(x)=\sum_{i=1}^n{\left\langle l_i^{\mathcal{A}},
    f(x)\right\rangle r_i^{\mathcal{A}}}$ is differentiable with derivative
 $f'(x)=\sum_{i=1}^n{\dfrac{1}{\lambda_i^{\mathcal{A}}}\left\langle
     l_i^{\mathcal{A}},\left[\tilde A f\right](x)\right\rangle
   r_i^{\mathcal{A}}}\in X$, 
 which implies $f\in\mathcal{D}(A)$, $Af=\mathcal{A}f'=\tilde A f$, $\tilde A\subset
 A$
 concluding the proof that $A$ is the generator of $S(t)$.

 The last statement of the theorem follows by direct computations (see
 for instance \cite{bressanbook}). 

\end{proof}

Take now $\mathcal{A}$, $\mathcal{B}$ and $\mathcal{C}$
three $n\times n$ matrices
such that:
\begin{description}\item[\textbf{(H)}]
    they are invertible hyperbolic matrices, $\mathcal{C}=
    \mathcal{A}+\mathcal{B}$
 and
  the greatest eigenvalue of the matrix $\mathcal{C}$ is bigger than
  the sum of the greatest eigenvalues of the matrices $\mathcal{A}$,
  $\mathcal{B}$: $\lambda_{n}^{\mathcal{C}}>\lambda_{n}^{\mathcal{A}}
  +\lambda_{n}^{\mathcal{B}}$.
\end{description}
  \begin{remark}
  Given two arbitrary invertible hyperbolic matrices, their sum needs
  not to be even hyperbolic, but three matrices satisfying the
  property above exist, take for example
  \begin{displaymath}
    \mathcal{A}=
    \begin{pmatrix}
      0 & 1\\
      1 & 0
    \end{pmatrix},\quad \mathcal{B}=
    \begin{pmatrix}
      0 & 1\\
      4 & 0
    \end{pmatrix},\quad \mathcal{C}=
    \begin{pmatrix}
      0 & 2\\
      5 & 0
    \end{pmatrix}.
  \end{displaymath}
  For these matrices we have $\mathcal{C}=\mathcal{A}+\mathcal{B}$,
  $\lambda_{1}^{\mathcal{A}}=-1$, $\lambda_{2}^{\mathcal{A}}=1$;
  $\lambda_{1}^{\mathcal{B}}=-2$, $\lambda_{2}^{\mathcal{B}}=2$;
  $\lambda_{1}^{\mathcal{C}}=-\sqrt{10}$,
  $\lambda_{2}^{\mathcal{C}}=\sqrt{10}$; so that
  $\lambda_{2}^{\mathcal{C}}=\sqrt{10}> 3 = 1 + 2=
  \lambda_{2}^{\mathcal{A}}+\lambda_{2}^{\mathcal{B}}$.
\end{remark}

Given three matrices $\mathcal{A}$, $\mathcal{B}$ and $\mathcal{C}$
satisfying \textbf{(H)},
define now the following three strongly continuous semigroups on $X$:
\begin{equation}
  \label{eq:defsem}
  \begin{split}
  \left[S(t)f\right](x) &= \sum_{i=1}^{n}\left\langle l_{i}^{\mathcal{A}},
  f(x + \lambda_{i}^{\mathcal{A}}t)\right\rangle
  r_{i}^{\mathcal{A}},\qquad t\ge 0,\; x\in\reali \qquad f\in X,\\
  \left[T(t)f\right](x) &= \sum_{i=1}^{n}\left\langle l_{i}^{\mathcal{B}},
  f(x + \lambda_{i}^{\mathcal{B}}t)\right\rangle
  r_{i}^{\mathcal{B}},\qquad t\ge 0,\; x\in\reali \qquad f\in X,\\
  \left[U(t)f\right](x) &= \sum_{i=1}^{n}\left\langle l_{i}^{\mathcal{C}},
  f(x + \lambda_{i}^{\mathcal{C}}t)\right\rangle
  r_{i}^{\mathcal{C}},\qquad t\ge 0,\; x\in\reali \qquad f\in X.
  \end{split}
\end{equation}
Observe that, by Proposition \ref{prop:21},
their generators $\left(A,\mathcal{D}(A)\right)$,
$\left(B,\mathcal{D}(B)\right)$ and $\left(C,\mathcal{D}(C)\right)$
satisfy
\begin{displaymath}
  \mathcal{D}(A)=\mathcal{D}(B)=\mathcal{D}(C) =\mathcal{D}
  \dot=\left\{f\in X :\; f'\in X\right\},
\end{displaymath}
so that the sum of the generators $A$ and $B$,
\begin{displaymath}
  \left(A+B\right)f=Af+Bf=\mathcal{A}f'+\mathcal{B}f'
  =\left(\mathcal{A}+\mathcal{B}\right)f'=\mathcal{C}f'=Cf,
  \text{ for any }f\in\mathcal{D},
\end{displaymath}
is already closed and generates $\left(U(t)\right)_{t\ge 0}$.

We can now state our main result.

\begin{theorem}
  If the matrices $\mathcal{A},\;\mathcal{B},\;\mathcal{C}$
  satisfy \textbf{(H)}, then the semigroup $\left(U(t)\right)_{t\ge 0}$ is not given by the
  Trotter
  product formula of $\left(S(t)\right)_{t\ge
    0}$ and $\left(T(t)\right)_{t\ge 0}$, i.e. there exists a function
  $f\in X$ such that
  \begin{equation}
    \label{eq:trotter2}
    U(t)f \not= \lim_{n\to +\infty}\left(S\left(\frac{t}{n}\right)
    T\left(\frac{t}{n}\right)\right)^{n}f.
  \end{equation}
\end{theorem}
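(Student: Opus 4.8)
The plan is to exploit the \emph{finite speed of propagation} built into the hyperbolic semigroups \eqref{eq:defsem}. First I would prove a domain-of-dependence estimate that is \emph{uniform in $n$}: for every $n\in\naturali\setminus\{0\}$, the value $\left[\left(S(t/n)T(t/n)\right)^{n}f\right](x)$ depends only on the restriction of $f$ to the fixed interval $I_{t}(x):=\left[x+(\lambda_{1}^{\mathcal{A}}+\lambda_{1}^{\mathcal{B}})t,\;x+(\lambda_{n}^{\mathcal{A}}+\lambda_{n}^{\mathcal{B}})t\right]$. This follows by unwinding the composition: by \eqref{eq:defsem} one application of $S(t/n)$ evaluates its argument only at the shifted points obtained by the shifts $\lambda_{i}^{\mathcal{A}}t/n$, $i=1,\dots,n$, each lying in $[\lambda_{1}^{\mathcal{A}}t/n,\lambda_{n}^{\mathcal{A}}t/n]$, and similarly $T(t/n)$ shifts by amounts in $[\lambda_{1}^{\mathcal{B}}t/n,\lambda_{n}^{\mathcal{B}}t/n]$; composing the $n$ copies of $S(t/n)$ with the $n$ copies of $T(t/n)$, every point at which $f$ gets sampled has the form $x+\frac{t}{n}\sum_{k=1}^{n}\left(\lambda_{i_{k}}^{\mathcal{A}}+\lambda_{j_{k}}^{\mathcal{B}}\right)$ for suitable indices, and every such point lies in $I_{t}(x)$. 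In particular, if $f$ vanishes on $I_{t}(0)$ then $\left[\left(S(t/n)T(t/n)\right)^{n}f\right](0)=0$ for \emph{every} $n$, so $\lim_{n\to+\infty}\left[\left(S(t/n)T(t/n)\right)^{n}f\right](0)=0$.

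Next I would construct the witness. Fix any $t>0$. By hypothesis \textbf{(H)} one has $\lambda_{n}^{\mathcal{C}}t>(\lambda_{n}^{\mathcal{A}}+\lambda_{n}^{\mathcal{B}})t=\sup I_{t}(0)$, so the point $\lambda_{n}^{\mathcal{C}}t$ lies strictly to the right of $I_{t}(0)$; moreover the finitely many numbers $\lambda_{i}^{\mathcal{C}}t$ with $\lambda_{i}^{\mathcal{C}}<\lambda_{n}^{\mathcal{C}}$ are all strictly less than $\lambda_{n}^{\mathcal{C}}t$. Hence one can choose $\delta>0$ so small that the interval $(\lambda_{n}^{\mathcal{C}}t-\delta,\lambda_{n}^{\mathcal{C}}t+\delta)$ is disjoint both from $I_{t}(0)$ and from $\{\lambda_{i}^{\mathcal{C}}t:\lambda_{i}^{\mathcal{C}}<\lambda_{n}^{\mathcal{C}}\}$. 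Pick a continuous $\phi:\reali\to[0,1]$ with $\spt\phi\subset(\lambda_{n}^{\mathcal{C}}t-\delta,\lambda_{n}^{\mathcal{C}}t+\delta)$ and $\phi(\lambda_{n}^{\mathcal{C}}t)=1$, and set $f:=\phi\,r_{n}^{\mathcal{C}}\in X$. Since $\spt\phi$ misses $I_{t}(0)$, the function $f$ vanishes on $I_{t}(0)$, and the first step gives $\lim_{n\to+\infty}\left[\left(S(t/n)T(t/n)\right)^{n}f\right](0)=0$.

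It then remains to compute $\left[U(t)f\right](0)$. From \eqref{eq:defsem}, $\left[U(t)f\right](0)=\sum_{i=1}^{n}\left\langle l_{i}^{\mathcal{C}},f(\lambda_{i}^{\mathcal{C}}t)\right\rangle r_{i}^{\mathcal{C}}$. By the choice of $\delta$ we have $f(\lambda_{i}^{\mathcal{C}}t)=\phi(\lambda_{i}^{\mathcal{C}}t)\,r_{n}^{\mathcal{C}}=0$ whenever $\lambda_{i}^{\mathcal{C}}\neq\lambda_{n}^{\mathcal{C}}$ and $f(\lambda_{i}^{\mathcal{C}}t)=r_{n}^{\mathcal{C}}$ whenever $\lambda_{i}^{\mathcal{C}}=\lambda_{n}^{\mathcal{C}}$; using the biorthonormality $\left\langle l_{i}^{\mathcal{C}},r_{n}^{\mathcal{C}}\right\rangle=\delta_{i,n}$, all the surviving terms except $i=n$ drop and we obtain $\left[U(t)f\right](0)=\left\langle l_{n}^{\mathcal{C}},r_{n}^{\mathcal{C}}\right\rangle r_{n}^{\mathcal{C}}=r_{n}^{\mathcal{C}}$, whose norm is $1$ by the normalization. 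Consequently $\left\|\left(S(t/n)T(t/n)\right)^{n}f-U(t)f\right\|_{X}\ge\left\|\left[U(t)f\right](0)\right\|=1$ for all $n$, so the Trotter iterates cannot converge to $U(t)f$ in $X$, which is \eqref{eq:trotter2}.

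I expect the only genuine point of the argument to be the first paragraph: recognizing that, no matter how large $n$ is, the Trotter iterates see the data of $f$ only inside the fixed compact interval $I_{t}(0)$, whereas condition \textbf{(H)} says precisely that the true semigroup $U(t)$ transports information to $x=0$ from a point lying \emph{outside} $I_{t}(0)$. Everything else is bookkeeping — keeping track of the finitely many evaluation points through the $2n$-fold composition, and separating the top eigenvalue $\lambda_{n}^{\mathcal{C}}$ from the smaller ones so that a possible multiplicity of $\lambda_{n}^{\mathcal{C}}$ causes no trouble, via the bump $\phi$ and biorthonormality.
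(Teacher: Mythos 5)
Your proof is correct and rests on the same idea as the paper's: the split product $\left(S(t/n)T(t/n)\right)^{n}$ propagates information with speed at most $\lambda_{n}^{\mathcal{A}}+\lambda_{n}^{\mathcal{B}}$ uniformly in $n$, while $U(t)$ transports the bump at the strictly larger speed $\lambda_{n}^{\mathcal{C}}$, so the two must differ somewhere. The only cosmetic difference is that you use a two-sided domain-of-dependence interval and a $t$-dependent bump evaluated at the single point $x=0$, whereas the paper fixes $f=\varphi\, r_{n}^{\mathcal{C}}$ with $\varphi$ supported in $(0,1)$ and tracks the leftward motion of the support's edge.
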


\begin{proof}
  Take the eigenvector $r_{n}^{\mathcal{C}}$ corresponding to the
  greatest eigenvalue $\lambda_{n}^{\mathcal{C}}$ of $\mathcal{C}$.
  Take now a function
  $\varphi\in\mathcal{C}^{\infty}\left(\reali,\reali\right)$
  satisfying $\varphi(x)>0$ for $x\in (0,1)$ and $\varphi(x)=0$
  for $x\not\in (0,1)$. Our function, which will be shown to satisfy
  \eqref{eq:trotter2},
  is given by $f=\varphi r_{n}^{\mathcal{C}}$. Indeed we can compute
  \begin{displaymath}
    \left[U(t)f\right](x)= \sum_{i=1}^{n}\left\langle l_{i}^{\mathcal{C}},
  \varphi(x + \lambda_{i}^{\mathcal{C}}t)r_{n}^{\mathcal{C}}\right\rangle
  r_{i}^{\mathcal{C}}=\varphi(x + \lambda_{n}^{\mathcal{C}}t)r_{n}^{\mathcal{C}},
\end{displaymath}
therefore $\left(U(t)f\right)(x)\not=0$ for
$x\in \left(-
  \lambda_{n}^{\mathcal{C}}t,1-\lambda_{n}^{\mathcal{C}}t\right)$.
Observe now that if $g\in X$ is such that $g(x)=0$ for all $x\le a$
for
some $a\in \reali$, then
\begin{equation}
  \label{eq:26}
 \left[S(t)g\right](x) = \sum_{i=1}^{n}\left\langle l_{i}^{\mathcal{A}},
  g(x + \lambda_{i}^{\mathcal{A}}t)\right\rangle
  r_{i}^{\mathcal{A}}=0\text{ for all }x\le a-\lambda_{n}^{\mathcal{A}}t  
\end{equation}
since when $x\le a - \lambda_{n}^{\mathcal{A}}t$, then
$x + \lambda_{i}^{\mathcal{A}}t\le a - 
  \left(\lambda_{n}^{\mathcal{A}}-
  \lambda_{i}^{\mathcal{A}}\right)t\le a$ and hence $g(x +
\lambda_{i}^{\mathcal{A}}t)=0$ for $i=1,\ldots,n$.
Analogously we have
\begin{equation}
  \label{eq:27}
 \left[T(t)g\right](x) = \sum_{i=1}^{n}\left\langle l_{i}^{\mathcal{B}},
  g(x + \lambda_{i}^{\mathcal{B}}t)\right\rangle
  r_{i}^{\mathcal{B}}=0\text{ for all }x\le a-\lambda_{n}^{\mathcal{B}}t.  
\end{equation}
Putting \eqref{eq:26} and \eqref{eq:27} together we obtain
\begin{equation}
  \label{eq:28}
  \left[S(t)T(t)g\right](x)=0\text{ for all }x \le a -
  \left(\lambda_{n}^{\mathcal{A}}+
    \lambda_{n}^{\mathcal{B}}\right)t.
\end{equation}
Using \eqref{eq:28} in the Trotter product one gets
\begin{displaymath}
\left[\left(S\left(\frac{t}{n}\right)
  T\left(\frac{t}{n}\right)\right)^{n}g\right](x)=0
\end{displaymath}
for all
\begin{displaymath}
x \le a - \left[\left(\lambda_{n}^{\mathcal{A}}+
    \lambda_{n}^{\mathcal{B}}\right)\frac{t}{n}
+\cdots +\left(\lambda_{n}^{\mathcal{A}}+
  \lambda_{n}^{\mathcal{B}}\right)\frac{t}{n}\right] =
a-\left(\lambda_{n}^{\mathcal{A}}+
    \lambda_{n}^{\mathcal{B}}\right)t
\end{displaymath}
Applying this to $f$ (which vanishes for $x\le 0$) we get
\begin{displaymath}
\left[\left(S\left(\frac{t}{n}\right)
  T\left(\frac{t}{n}\right)\right)^{n}f\right](x)=0\text{ for all }
x\le -\left(\lambda_{n}^{\mathcal{A}}+
    \lambda_{n}^{\mathcal{B}}\right)t,
\end{displaymath}
therefore
\begin{displaymath}
\lim_{n\to\infty}\left[\left(S\left(\frac{t}{n}\right)
  T\left(\frac{t}{n}\right)\right)^{n}f\right](x)=0\text{ for all }
x\le -\left(\lambda_{n}^{\mathcal{A}}+
    \lambda_{n}^{\mathcal{B}}\right)t.
\end{displaymath}
But since $\lambda_{n}^{\mathcal{C}}>\lambda_{n}^{\mathcal{A}}+
\lambda_{n}^{\mathcal{B}}$, we define $\xi(t)=
\min\left\{1-\lambda_{n}^{\mathcal{C}}t,-\left(\lambda_{n}^{\mathcal{A}}+
    \lambda_{n}^{\mathcal{B}}\right)t\right\}$, such that
for $x$ in the non empty (for $t>0$) interval
$\left(-\lambda_{n}^{\mathcal{C}}t,\xi(t)\right)$
\begin{displaymath}
\lim_{n\to\infty}\left[\left(S\left(\frac{t}{n}\right)
  T\left(\frac{t}{n}\right)\right)^{n}f\right](x)=0,
\end{displaymath}
while $\left[U(t)f\right](x)\not=0$ proving the Theorem.
\end{proof}

\begin{remark}
  This example shows that, in general, the solutions to the first order linear
  hyperbolic system
  \begin{displaymath}
      \frac{\partial}{\partial t}u =
      \left(\mathcal{A}+\mathcal{B}\right)\frac{\partial}{\partial x}u
  \end{displaymath}
  can not be obtained through the operator splitting composition
  of solutions to the two systems
  \begin{displaymath}
    \frac{\partial}{\partial t}u =
    \mathcal{A}\frac{\partial}{\partial x}u,\qquad
    \frac{\partial}{\partial t}u =
      \mathcal{B}\frac{\partial}{\partial x}u.
  \end{displaymath}
\end{remark}

\bibliographystyle{abbrv}
\bibliography{Main}

\end{document}